\documentclass[a4paper, 10pt]{amsart}

\usepackage{amsmath, amsfonts, amssymb, mathtools, url, graphicx, enumitem}
\usepackage[usenames, dvipsnames]{color}
\usepackage[colorlinks=true,
			raiselinks=true,
			linkcolor=MidnightBlue,
			citecolor=Mahogany,
			urlcolor=ForestGreen,
			pdfauthor=Debasish Chatterjee,
			pdftitle={Controlling stochastic linear systems over noisy communication channels},
			pdfkeywords={receding horizon control, noisy control channel},
			pdfsubject={Technical Report},
			plainpages=false]{hyperref}
\usepackage[square, sort&compress, authoryear]{natbib}
\usepackage[varg]{pxfonts}
\linespread{1.075}

\newtheoremstyle{dcstyle}{10pt}{10pt}{\slshape}{}{\bfseries}{.}{ }{}
\newtheoremstyle{nonum}{10pt}{10pt}{}{}{\itshape}{.}{ }{\thmname{#1}\thmnote{ (\mdseries #3)}}
\theoremstyle{dcstyle}
\newtheorem{theorem}{Theorem}[section]

\newtheorem{lemma}[theorem]{Lemma}
\newtheorem{proposition}[theorem]{Proposition}

\theoremstyle{definition}

\newtheorem{assumption}[theorem]{Assumption}

\newtheorem{prob}[theorem]{Problem}

\theoremstyle{remark}
\newtheorem{remark}[theorem]{Remark}
\newtheorem{example}[theorem]{Example}

\theoremstyle{nonum}


\DeclareMathOperator{\rank}{rank}
\DeclareMathOperator{\diam}{diam}

\DeclareMathOperator{\sat}{sat}
\DeclareMathOperator{\var}{var}

\DeclareMathOperator{\vecof}{vec}

\newcommand{\R}{\mathbb{R}}
\newcommand{\N}{\mathbb{N}}
\newcommand{\Nz}{\mathbb{N}_{0}}

\newcommand{\Let}{\coloneqq}

\newcommand{\acn}{\tilde u}
\newcommand{\uu}{\boldsymbol{u}}
\newcommand{\cnoise}{\nu}
\newcommand{\Cnoise}{\boldsymbol{\cnoise}}
\newcommand{\cnoiseset}{\mathrm{T}}
\newcommand{\Sprod}{\odot}
\newcommand{\EE}{\mathsf E}
\newcommand{\PP}{\mathsf P}
\newcommand{\sigalg}{\mathfrak F}
\newcommand{\Uset}{\mathcal U}
\newcommand{\Iset}{X}

\newcommand{\xz}{\bar x}
\newcommand{\Reach}{\mathfrak R}
\newcommand{\transp}{^{\mathsf T}}

\newcommand{\wt}{\widetilde}
\newcommand{\lra}{\longrightarrow}
\newcommand{\NSR}{\mathit{\Psi}}

\newcommand{\msbound}{\zeta}
\newcommand{\xSchur}{x^{(1)}}
\newcommand{\xOrtho}{x^{(2)}}

\renewcommand{\ge}{\geqslant}
\renewcommand{\le}{\leqslant}

\renewcommand{\leq}{\leqslant}

\newcommand{\abs}[1]{\left\lvert{#1}\right\rvert}
\newcommand{\norm}[1]{\left\lVert{#1}\right\rVert}
\newcommand{\Lp }[1]{\mathrm L_{#1}}
\newcommand{\secref}[1]{\S\ref{#1}}

\newcommand{\AssumptionEnd}{\hspace{\stretch{1}}{$\diamondsuit$}}
\newcommand{\ExampleEnd}{\hspace{\stretch{1}}{$\Delta$}}
\newcommand{\RemarkEnd}{\hspace{\stretch{1}}{$\triangleleft$}}

\allowdisplaybreaks

\title[Controlling stochastic linear systems over noisy control channels]{Mean-square boundedness of stochastic networked control systems with bounded control inputs}

\author[D.\ Chatterjee]{Debasish Chatterjee}
\author[S.\ Amin]{Saurabh Amin}
\author[P.\ Hokayem]{Peter Hokayem}
\author[J.\ Lygeros]{John Lygeros}
\author[S.\ S.\ Sastry]{S.\ Shankar Sastry}
\thanks{D. Chatterjee, P. Hokayem, J. Lygeros are with the Automatic Control Laboratory, ETH Z\"urich, Physikstrasse 3, 8092 Z\"urich, Switzerland. Their research was partially supported by the Swiss National Science Foundation under grant 200021-122072 and by the European Commission under the project Feednetback FP7-ICT-223866 (www.feednetback.eu).}%
\thanks{S.\ Amin is with the department of Civil and Environmental Engineering, University of California, Berkeley. His research was partially supported by the TRUST (Team for Research in Ubiquitous Secure Technology), which receives support from the National Science Foundation (NSF award number CCF-0424422).}%
\thanks{S.\ S.\ Sastry is with the department of Electrical Engineering and Computer Science, University of California, Berkeley, USA}%
\thanks{Emails: \{chatterjee,hokayem,lygeros\}@control.ee.ethz.ch, amins@berkeley.edu,$\qquad\qquad\qquad\qquad$ sastry@coe.berkeley.edu}%

\keywords{linear systems, stochastic stability, communication constraints}


\begin{document}

	\begin{abstract}
		We consider the problem of controlling marginally stable linear systems using bounded control inputs for networked control settings in which the communication channel between the remote controller and the system is unreliable. We assume that the states are perfectly observed, but the control inputs are transmitted over a noisy communication channel. Under mild hypotheses on the noise introduced by the control communication channel and large enough control authority, we construct a control policy that renders the state of the closed-loop system mean-square bounded.
	\end{abstract}

	\maketitle

	\section{Introduction}
		Communication channels have become ubiquitous in control applications such as remotely operated robotic systems \citep{ref:HokSpo-06}. In such applications, measurement and control signals are exchanged via a lossy and noisy communication channels, which makes the system a {\em networked control system} (NCS). The research in NCS has branched into many different directions that deal with the effects of delays, limited information exchanged, and information losses on the stability of the plant, see, e.g., \citep{ref:NaiFagZamEva-07} and the references therein.

		Control under information loss in the communication channel has been extensively studied within the Linear Quadratic Gaussian (LQG) framework \citep{ref:Imer,ref:SSFPS2007}. Typically, the communication channel(s) are modeled by an independent and identically distributed (i.i.d) Bernoulli process, which assign probabilities to the successful transmission of packets. Perhaps the most well known result in this setting is: When the transmission of sensor and control data packets happens over a network with TCP-like protocols, the closed-loop system under LQG controller can be mean-square stabilized provided that the probabilities of successful transmission are above a certain threshold. Since the TCP-like protocols enable the receiver to obtain an acknowledgment of whether or not the packets were successfully transmitted, the separation principle holds and the optimal LQG controller is linear in the estimated state. Thus, this result is a proper generalization of the classical LQG control problem to the networked control setting.

		Within the LQG setting, control inputs are not assumed bounded and therefore linear state feedback is a permissible and optimal strategy. However, guaranteeing  hard bounds on the control inputs is of paramount importance in applications.
Consequently, many researchers have pursued the problem of optimal control and stabilization for linear systems with bounded control inputs \citep{ref:Wonham, ref:Toivonen, ref:Saberi-99, ref:BernsteinMichel}. This problem has also received a renewed interest in recent years \citep{ref:RamChaMilHokLyg-09, ref:WangBoyd2009, ref:ChaHokLyg-09, ref:HokChaLyg-09, ref:Digailova}. In the deterministic setting, it is well-known \citep{ref:YanSonSus-97} that global asymptotic stabilization of a linear system $x_{t + 1} = A x_t + B u_t$ is possible if and only if the pair $(A, B)$ is stabilizable under arbitrary controls and the spectral radius of the system matrix $A$ is at most $1$. In the stochastic setting, it was argued in \citep{ref:NaiEva-04} that ensuring a mean-square bound for every initial condition is not possible for linear systems with bounded control elements if the system matrix $A$ is unstable. In \citep{ref:RamChaMilHokLyg-09} we established the existence of a policy with sufficiently large control authority that ensures mean-square boundedness of the states of the system under the assumption that $A$ is Lyapunov stable. Although Lyapunov stability of $A$ is a stronger requirement than the spectral radius of $A$ being at most $1$, to the best of our knowledge, this is the current state of the art. 

		In this article we generalize the results of \citep{ref:RamChaMilHokLyg-09} to incorporate noisy control channels. We consider mean-square boundedness of stochastic linear systems under the following specification:
		\begin{itemize}[label=$\circ$, leftmargin=*]
			\item the communication channel between the controller and the system actuators is noisy whereas the communication channel between sensors and controller is noiseless, and
			\item hard constraints on the control inputs must be satisfied.
		\end{itemize} 

		We are thus concerned with a networked setting as proposed in \citep{ref:Elia2005, ref:GSC2007, ref:SSFPS2007} when generalized to incorporate bounded control inputs~\citep{ref:RamChaMilHokLyg-09}. The control input $u^{(i)}$ for the $i$-th plant is communicated to the corresponding plant actuator via a lossy communication channel, which is characterized by the noise $\cnoise^{(i)}$ affecting the control input multiplicatively as shown in Figure \ref{fig:topology}.
		\begin{figure}[h]
			\label{fig:topology}
			\includegraphics[scale=0.85]{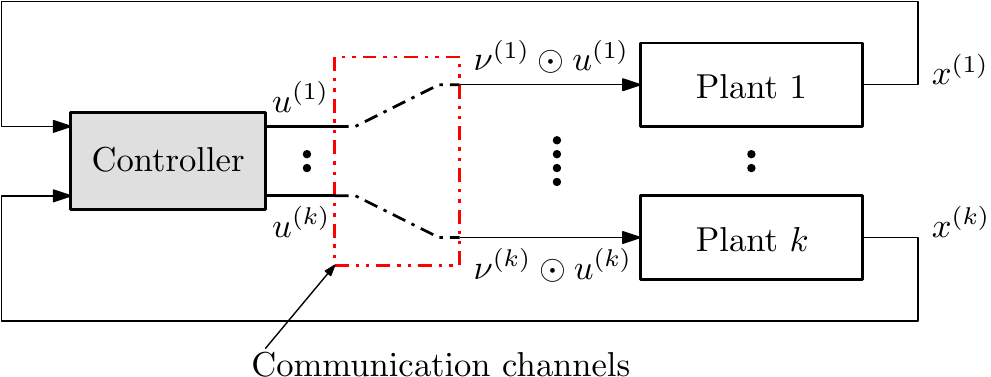}
			\caption{Topology of the control system.}
		\end{figure}
		We assume that the states are perfectly observed and are transmitted to the controller without any loss.

		\subsection*{Notation}
		For any random vector $\cnoise$ let $\mu_\cnoise \Let \EE[\cnoise]$ denote its mean and $\sigma_{\cnoise} \Let \var(\cnoise) \Let \EE\bigl[\norm{\cnoise - \mu_\cnoise}^2\bigr]$ denote its second moment. For a matrix $M$ we let $\norm M$ denote the induced Euclidean norm of $M$. We shall employ the standard notation $\diam(S) \Let \sup_{x, y\in S}\|x - y\|$ to denote the diameter of a subset $S$ of Euclidean space. For $n\in\N$, by $\mathbf{1}_{n}$ we denote a vector of length $n$ with all entries equal to $1$. For $r > 0$ and $n\in\N$, define the saturation function
		\begin{equation}
		\label{e:whatissatr}
			\R^n\ni z\longmapsto \sat_r(z) \Let
			\begin{cases}
				z & \text{if $\norm{z} < r$}\\
				rz/\norm{z} & \text{otherwise}
			\end{cases}
			\quad\in\R^n.
		\end{equation}

		The remainder of this article is organized as follows. In \secref{s:problem} we formalize the main problem with all the underlying assumptions, and in \secref{s:main} we state the main results. The proofs are provided in \secref{s:proofs}.

	\section{Problem Setup}
	\label{s:problem}
		Consider the following discrete-time stochastic linear system subjected to packet drops in the control communication channel
		\begin{equation}
		\label{e:sys}
		\begin{aligned}
			x_{t+1} & = A x_t + B \acn_t + w_t,\\
			\acn_t & \Let \cnoise_t \Sprod u_t,
		\end{aligned}
		\qquad \quad t\in\Nz,
		\end{equation}
		where $x_t\in\R^d$ is the state, $u_{t}\in\R^m$ is the control input, $A\in\R^{d\times d}$ is the dynamics matrix, $B\in\R^{d\times m}$ is the input matrix, $(w_t)_{t\in\Nz}$ is an $\R^d$-valued random process noise, and $(\cnoise_t)_{t\in\Nz}$ is an $\R^m$-valued random process modelling the uncertainty in the control communication channel, and $\Sprod$ denotes the Schur or Hadamard product of matrices.\footnote{Recall \cite[p. 444]{ref:Ber-09} that if $M', M''$ are $n_1\times n_2$ matrices with real entries, then $M'\Sprod M''$ is the $n_1\times n_2$ matrix defined by $(M'\Sprod M'')_{i, j} \Let (M')_{i, j} (M'')_{i, j}$.} The initial condition $x_0 = \xz$ is given and  the state $x_{t}$ is perfectly observed by the controller.
		
		The controller determines the control input $u_{t}$ based on the history of $k$ states $\Iset_{t,k} \Let (x_{t-k+1}, \ldots, x_{t-1},x_{t})$. (For $t = 0, \ldots, k-2$, $\Iset_{t, k} \Let (\underset{(k-1-t)\text{-times}}{\underbrace{x_0, \ldots, x_0}}, x_0, x_1, \ldots, x_t)$.) The controller synthesizes a deterministic control policy $\pi = (\pi_t)_{t\in\Nz}$ which maps the states vector $\Iset$ into a control set $\Uset$. To wit,
		\[
			u_t = \pi_{t}(\Iset_{t,k}), \quad t\in\Nz,
		\]
		where the maps $\pi_{t}:\R^{kd}\lra\Uset\subseteq\R^m$, $t\in\Nz$, are Borel measurable. Such a control policy $\pi$ is known as a \emph{$k$-history dependent policy}. The control set $\Uset$ is assumed to be nonempty, compact, and containing the origin. Any control policy $\pi = (\pi_{t})_{t\in\Nz}$ which guarantees that the control input sequence $(u_t)_{t\in\Nz}$ satisfies
		\begin{align}
		\label{e:uinUlabel}
			u_t\in\Uset, \qquad t\in\Nz,
		\end{align}
		is called an \emph{admissible $k$-history dependent policy}. 
In many practical situations involving saturating actuators and hard bounds on control inputs, $\Uset$ is chosen to be a ball, i.e.,
		\begin{align}
		\label{e:boxsetdef}
			\Uset \Let \bigl\{z\in\R^m\,\big|\, \norm{z} \le U_{\max}\bigr\},
		\end{align}
		where $U_{\max}>0$ is called the \emph{control authority} available to the controller.

		Our control objective is to synthesize an admissible $k$-history dependent policy which ensures that the second moment of the closed-loop system, for any initial condition $\xz\in \R^n$,
		\begin{align}
		\label{e:chclosedloop}
			x_{t+1}=Ax_{t}+B \cnoise_t \Sprod \pi_{t}(\Iset_{t,k})+w_{t}, \qquad t\in\Nz,
		\end{align}	
		remains bounded for all $t\in\Nz$. We shall focus on the following problem:
		\begin{prob}
		\label{prob:main}
			Find, if possible, a control authority $U_{\max}$ and an admissible policy $\pi=(\pi_t)_{t\in\Nz}$ with control authority $U_{\max}$, such that the following condition holds:
			\begin{quote}
				for every initial condition $x_0 = \xz\in\R^d$ there exists a constant $\msbound > 0$ such that the closed-loop system \eqref{e:chclosedloop} satisfies $\EE_{\xz}[\norm{x_t}^2] \le \msbound$ for all $t\in\Nz$.
			\end{quote}
		\end{prob}
		In practice, a performance index that accounts for the average sum of cost-per-stage functions (involving the state and control inputs) of the system is often required to be minimized; however, in this article we are only concerned with the stability property defined in Problem \ref{prob:main}.

		We shall make the following standing hypotheses:
		\begin{assumption}
		\label{a:basic}
			\mbox{}
			\begin{enumerate}[label={(\roman*)}, leftmargin=*, align=right, widest = iii]
				\item The matrix $A$ is Lyapunov stable, i.e., all the eigenvalues of $A$ lie in the closed unit circle, and all eigenvalues $\lambda$ satisfying $\abs{\lambda} = 1$ have equal algebraic and geometric multiplicities.\label{a:basic:stability}
				\item The pair $(A, B)$ is stabilizable.\label{a:basic:stabilizability}
				\item The process noise $(w_t)_{t\in\Nz}$ is an independent sequence, and has bounded fourth moment, i.e., $C_4 \Let \sup_{t\in\Nz} \EE[\norm{w_t}^4] < \infty$.\label{a:basic:4moment}
				\item The control input sequence~$(u_{t})_{t\in\Nz}$ satisfies~\eqref{e:uinUlabel} and~\eqref{e:boxsetdef}.\label{a:basic:uconstr}
				\item The control channel noise $(\cnoise_t)_{t\in\Nz}$ is i.i.d.\AssumptionEnd
			\end{enumerate}
		\end{assumption}

		It follows from Assumption \ref{a:basic}-\ref{a:basic:4moment} that there exists $C_1 > 0$ such that $\EE[\norm{w_t}] \le C_1$ for all $t\in\Nz$. (For instance, Jensen's inequality shows that $C_1 \le \sqrt[4]{C_4}$.) Note also that Assumption \ref{a:basic}-\ref{a:basic:4moment} does not require that the process noise vectors $(w_t)_{t\in\Nz}$ be identically distributed. The assumption of mutual independence of $(w_t)_{t\in\Nz}$ can also be relaxed, but we shall not pursue this line of generalization here.

		Without any loss of generality, we also assume that $A$ is in real Jordan canonical form (cf.\ \citep{ref:NaiEva-04}). Indeed, given a linear system described by system matrices $(\tilde A, \tilde B)$, there exists a coordinate transformation in the state-space that brings the pair $(\tilde A, \tilde B)$ to the pair $(A, B)$, where $A$ is in real Jordan form \cite[p. 150]{ref:HorJoh-90}. In particular, choosing a suitable ordering of the Jordan blocks, we can ensure that the pair $(A,  B)$ has the form $\left(\begin{bmatrix}A_{1} & 0\\ 0 & A_{2}\end{bmatrix}, \begin{bmatrix}B_1\\ B_2\end{bmatrix}\right)$, where $A_{1}\in\R^{d_1\times d_1}$ is Schur stable, and $A_{2}\in\R^{d_2\times d_2}$ has its eigenvalues on the unit circle. By Assumption \ref{a:basic}-\ref{a:basic:stability}, $A_{2}$ is therefore block-diagonal with elements on the diagonal being either $\pm 1$ or $2\times 2$ rotation matrices. As a consequence, $A_{2}$ is orthogonal.  Moreover, since $(A, B)$ is stabilizable by Assumption \ref{a:basic}-\ref{a:basic:stabilizability}, the pair $(A_{2}, B_2)$ must be reachable in a number of steps $\kappa \leq d_2$ that depends on the dimension of $A_{2}$ and the structure of $(A_{2}, B_2)$, i.e., $\rank(\Reach_\kappa(A_2,B_2)) = d_2$, where
		\[
			\Reach_\kappa(A_2,B_2) \Let \begin{bmatrix} A_2^{\kappa-1} B_2 & \cdots & A_2B_2 & B_2 \end{bmatrix}.
		\]
		The smallest such $\kappa$ is called the \emph{controllability index} of $(A_2,B_2)$ and is fixed throughout the rest of this article. Summing up, we can start by considering that the state equation  \eqref{e:sys} has the form
		\begin{equation}
		\label{e:Jordan}
			\begin{bmatrix} \xSchur_{t+1}\\  \xOrtho_{t+1}\end{bmatrix}
			= \begin{bmatrix} A_{1} \xSchur_t\\  A_{2} \xOrtho_t\end{bmatrix}
			+ \begin{bmatrix} B_1\\  B_2\end{bmatrix} \acn_t
			+ \begin{bmatrix} w^{(1)}_t\\ w^{(2)}_t\end{bmatrix},
		\end{equation}
		where $A_{1}$ is Schur stable, $A_{2}$ is orthogonal, and the subsystem $(A_{2}, B_2)$ is reachable in $\kappa$ steps. Since the matrix $\Reach_\kappa(A_2, B_2)$ has rank full rank, its Moore-Penrose pseudoinverse exists and is given by
		\[
		    \Reach_\kappa(A_2, B_2)^+ \Let \Reach_\kappa(A_2, B_2)\transp\bigl(\Reach_\kappa(A_2, B_2)\Reach_\kappa(A_2, B_2)\transp\bigr)^{-1}.
		\]

	\section{Main Results}
	\label{s:main}
		We are ready to state the main result pertaining to the existence of policy of bounded authority that renders the state of the system \eqref{e:sys} mean-square bounded. Let us define the normalized measure of dispersion or the noise-to-signal ratio of the channel $\NSR \Let \sqrt{{\sigma_\cnoise}}\cdot \max_{i=1, \ldots, m}\abs{({\mu_\cnoise})_i}^{-1}$. We impose the following additional requirements:

		\begin{assumption}
		\label{a:key}
			In addition to Assumption \ref{a:basic} we stipulate that:
			\begin{enumerate}[label={(\roman*)}, leftmargin=*, align=right, widest=vii, start=6]
				\item The control channel noise has bounded range, i.e., $\cnoise_0\in \cnoiseset$, where $\cnoiseset$ is a bounded subset of $\R^m$, and that $\mu_\cnoise$ has nonzero entries.\label{a:key:cnoisesetandnonzeroentries}
				\item The following two technical conditions hold:\label{a:key:techcond}
				\begin{enumerate}[label=(vii.\alph*), leftmargin=*, align=right]
					\item $\displaystyle{\kappa \NSR \norm{\Reach_\kappa(A_2, B_2)^+}\norm{\Reach_\kappa(A_2, B_2)} < 1}$.\label{a:key:meanvarrelation}
					\item $\displaystyle{U_{\max} > \frac{ \sqrt\kappa C_1 \left(\max_{i=1, \ldots, m}\abs{({\mu_\cnoise})_i}^{-1}\right) \norm{\Reach_\kappa(A_2, B_2)^+}\norm{\Reach_\kappa(A_2, I_2)} }{ 1 - \kappa  \NSR \norm{\Reach_\kappa(A_2, B_2)^+}\norm{\Reach_\kappa(A_2, B_2)} }}$.\label{a:key:Umaxbound}\AssumptionEnd
				\end{enumerate}
			\end{enumerate}
		\end{assumption}

		\begin{proposition}
		\label{p:main}
			Consider the system \eqref{e:sys}, and suppose that Assumption \ref{a:key} holds. Then there exists a $\kappa$-history dependent policy $\pi \Let (\pi_t)_{t\in\Nz}$ with control authority at most $U_{\max}$ (see \eqref{e:finalcontrolpolicy} below), such that for every initial condition $\xz$ there exists a constant $\msbound = \msbound(\xz, \kappa, \mu_\cnoise, \NSR, C_1) > 0$ with
			\[
				\EE_{\xz}[\norm{x_t}^2] \le \msbound\qquad \text{for all }t\in\Nz
			\]
			in closed-loop.
		\end{proposition}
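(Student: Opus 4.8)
The plan is to exploit the block decomposition \eqref{e:Jordan} and to treat the Schur-stable subsystem and the orthogonal subsystem separately, the latter being the only genuine source of difficulty. For the Schur-stable part, since $A_1$ is Schur there is a submultiplicative bound $\norm{A_1^t}\le c\lambda^t$ with $\lambda<1$; because the effective inputs $\acn_t=\cnoise_t\Sprod u_t$ are bounded (the channel noise has bounded range by Assumption~\ref{a:key}-\ref{a:key:cnoisesetandnonzeroentries} and $u_t\in\Uset$) while the process noise has bounded second moment (a consequence of Assumption~\ref{a:basic}-\ref{a:basic:4moment}), a routine convolution estimate gives $\sup_{t}\EE_{\xz}[\norm{\xSchur_t}^2]<\infty$ irrespective of the particular policy. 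Thus the entire burden falls on the orthogonal block $\xOrtho$, for which $\norm{A_2^\kappa}=1$, so that the homogeneous dynamics neither grow nor decay and the accumulated process noise would otherwise make $\EE[\norm{\xOrtho_t}^2]$ diverge linearly.

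Next I would design the $\kappa$-history policy on the orthogonal block by reachability-based cancellation. Writing the $\kappa$-step propagation
\[
  \xOrtho_{(n+1)\kappa}=A_2^\kappa\,\xOrtho_{n\kappa}+\Reach_\kappa(A_2,B_2)\,\tilde{\boldsymbol u}_n+\Reach_\kappa(A_2,I_2)\,\boldsymbol w_n,
\]
where $\tilde{\boldsymbol u}_n$ and $\boldsymbol w_n$ stack the effective inputs and the process-noise vectors over the window, the nominal control is chosen so that its channel-mean image cancels $A_2^\kappa\xOrtho_{n\kappa}$: concretely one pre-divides entrywise by $\mu_\cnoise$ (legitimate precisely because $\mu_\cnoise$ has nonzero entries) and applies $-\Reach_\kappa(A_2,B_2)^+A_2^\kappa\xOrtho_{n\kappa}$, and then passes the stacked vector through $\sat_r$ so that each $u_t\in\Uset$. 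This produces the admissible policy of control authority at most $U_{\max}$ asserted in the statement.

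The heart of the argument is a Foster--Lyapunov drift estimate for the subsampled chain $z_n\Let\xOrtho_{n\kappa}$. In the unsaturated regime the channel mean cancels the homogeneous term exactly, leaving only the conditionally zero-mean multiplicative residual $\Reach_\kappa(A_2,B_2)(\boldsymbol N_n-\boldsymbol M)\boldsymbol u_n$ (with $\boldsymbol N_n,\boldsymbol M$ the block-diagonal channel-gain and channel-mean matrices) together with the additive term $\Reach_\kappa(A_2,I_2)\boldsymbol w_n$. Bounding the conditional mean of the residual by $\kappa\NSR\norm{\Reach_\kappa(A_2,B_2)^+}\norm{\Reach_\kappa(A_2,B_2)}\norm{z_n}$ --- this is exactly where the definition of $\NSR$ enters, since each relative-noise entry has variance at most $\NSR^2$ and the factor $\kappa$ arises from a per-window-step estimate --- and the additive term by $\sqrt\kappa\,C_1\norm{\Reach_\kappa(A_2,I_2)}$ in mean, one obtains
\[
  \EE\bigl[\norm{z_{n+1}}\,\big|\,\sigalg_{n\kappa}\bigr]\le \rho\,\norm{z_n}+b,\qquad \rho\Let\kappa\NSR\norm{\Reach_\kappa(A_2,B_2)^+}\norm{\Reach_\kappa(A_2,B_2)},
\]
with $\rho<1$ precisely by Assumption~\ref{a:key}-\ref{a:key:meanvarrelation} and $b$ proportional to $\sqrt\kappa\,C_1\norm{\Reach_\kappa(A_2,I_2)}$. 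The role of the control-authority bound Assumption~\ref{a:key}-\ref{a:key:Umaxbound} is to guarantee that the drift-balance level $b/(1-\rho)$ lies strictly inside the unsaturated region, so that this contraction is in force at every state large enough to matter; outside that region the effective input is bounded (bounded-range channel noise times a bounded $u$) and $\norm{A_2^\kappa}=1$, whence the increments of $\norm{z_n}$ are controlled uniformly by $\norm{\boldsymbol w_n}$ alone.

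Finally I would convert the drift into the desired moment bound. The negative drift of $\norm{z_n}$ outside a bounded set, together with conditional increment moments that are bounded thanks to Assumption~\ref{a:basic}-\ref{a:basic:4moment}, yields $\sup_n\EE_{\xz}[\norm{z_n}^2]<\infty$ via a Pemantle--Rosenthal-type moment lemma; interpolating over the at most $\kappa$ intermediate instants (again using $\norm{A_2^\kappa}=1$, boundedness of the effective inputs, and the second moment of $w$) upgrades this to $\sup_t\EE_{\xz}[\norm{\xOrtho_t}^2]<\infty$. Combining with the Schur block and undoing the fixed, invertible Jordan transformation produces the constant $\msbound=\msbound(\xz,\kappa,\mu_\cnoise,\NSR,C_1)$ and proves the claim. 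The main obstacle I anticipate is the joint treatment of the saturation nonlinearity and the multiplicative channel noise in the drift step: one must verify that the attracting ball determined by the noise level sits inside the unsaturated region --- this is the quantitative content of Assumption~\ref{a:key}-\ref{a:key:Umaxbound} --- and that the saturated regime contributes only moment-bounded, non-expansive increments, so that the Foster--Lyapunov lemma applies globally rather than merely locally.
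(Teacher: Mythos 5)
Your overall architecture is the paper's: real Jordan decomposition into a Schur block (handled by a routine convolution bound) and an orthogonal block, $\kappa$-step subsampling, a policy built from $\Reach_\kappa(A_2,B_2)^+$ with entrywise division by $\mu_\cnoise$ and a saturation, and a Pemantle--Rosenthal moment lemma. But there is a genuine gap in your central drift step, and it concerns exactly the regime that matters. The Pemantle--Rosenthal lemma requires a \emph{strictly negative drift on the set where $\norm{\xOrtho_{\kappa t}}$ is large}, i.e.\ precisely where your control is \emph{saturated}. You establish a contraction $\EE\bigl[\norm{z_{n+1}}\mid\sigalg_{n\kappa}\bigr]\le\rho\norm{z_n}+b$ only in the unsaturated regime (a bounded neighbourhood of the origin, since saturation triggers when $\norm{z_n}$ is large), and for the saturated regime you claim only that the increments are ``moment-bounded, non-expansive'' and ``controlled by $\norm{\boldsymbol w_n}$ alone.'' That combination --- contraction on a bounded set plus bounded increments outside it --- does not imply mean-square boundedness: bounded increments alone permit random-walk behaviour with linearly growing variance. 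The negative drift you invoke in your final paragraph is asserted but never derived where it is needed.

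The missing computation is the content of the paper's Lemma \ref{l:PRcondition1}: on $\{\norm{\xOrtho_{\kappa t}}\ge r\}$ the saturated control still pulls the mean term back by a fixed amount (with the paper's policy \eqref{e:finalcontrolpolicy}, which saturates the target $A_2^\kappa\xOrtho_{\kappa t}$ rather than the control vector, the mean term becomes exactly $\norm{\xOrtho_{\kappa t}}-r$), and this fixed decrease must dominate the multiplicative-noise residual, which in the saturated regime is bounded by $\kappa\norm{\Reach_\kappa(A_2,B_2)}U_{\max}\sqrt{\sigma_\cnoise}$ (a constant proportional to the control authority, not to $\norm{z_n}$), plus the additive term $\sqrt\kappa\,C_1\norm{\Reach_\kappa(A_2,I_2)}$. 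Assumption \ref{a:key}-\ref{a:key:meanvarrelation} is what guarantees that the deterministic pull-back beats the multiplicative degradation \emph{there}, and Assumption \ref{a:key}-\ref{a:key:Umaxbound} is what lets one choose $r$ simultaneously large enough for the resulting drift to be $\le -a<0$ and small enough that $\norm{\uu_{\kappa t}}\le r\norm{\Reach_\kappa(A_2,B_2)^+}\max_i\abs{(\mu_\cnoise)_i}^{-1}\le U_{\max}$; you instead assign both assumptions to the small-state contraction, where they are not the binding constraints. Your variant of the policy (saturating the stacked control rather than the target) would also work, since $\sat_r$ scales its argument and the image under $\Reach_\kappa(A_2,B_2)$ remains antiparallel to $A_2^\kappa\xOrtho_{\kappa t}$, but you would still have to carry out the saturated-regime drift estimate explicitly to close the argument.
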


		\begin{remark}
		\label{r:cnoiseset}
			 Proposition \ref{p:main} assumes minimal structure from the set in which the control channel noise takes its values. In particular, we do not assume that the control channel noise takes values in a finite set---in fact, $\cnoiseset$ may be uncountable. (While the standard choice of modelling uncertainty in the control channels has focussed on a multiplicative Bernoulli $\{0, 1\}$ random variable multiplying the entire control vector, there are cases in which the uncertainty model considered in \eqref{e:sys} (i.e., different random variables multiplying the components of the controller,) makes sense. For instance, the standard processes of control quantization or ``binning'' can be viewed as introducing uncertainty to the controller---components of the controller being multiplied by bounded but not necessarily identically distributed random variables; the set $\cnoiseset$ has the natural interpretation of the ``largest bin.'') In view of this, Assumption \ref{a:key}-\ref{a:key:meanvarrelation} is a technical condition stipulated as a trade-off for the absence of any further structure in the set $\cnoiseset$.\RemarkEnd
		\end{remark}

		In \secref{s:proofs} we prove Proposition \ref{p:main} by a constructive method. It turns out that our policy (see \eqref{e:finalcontrolpolicy} below) is derived from the $\kappa$-subsampled system $(x_{\kappa t})_{t\in\Nz}$, and is $\kappa$-history dependent. To wit, for each $n\in\Nz$, at time $\kappa n$, based on the state $x_{\kappa n}$, the policy synthesizes a $\kappa$-long sequence of control values for time steps $\kappa n, \kappa n + 1, \ldots, \kappa(n+1)-1$.

		Let us assume that the same uncertainty enters all the control channels, i.e., $\acn_t = \cnoise_t u_t$, where $\cnoise_t\in\R$. The structure of our control policy permits us to transmit the control data packets in a single burst each $\kappa$ steps. This however, necessitates the presence of a buffer at the actuator end of the plant to store the $\kappa$ control values $\{\cnoise_{\kappa n} u_{\kappa n}, \cnoise_{\kappa n} u_{\kappa n+1}, \ldots, \cnoise_{\kappa n}u_{\kappa (n+1)-1}\}$ transferred in a burst at time $\kappa n$, such that at each time $t\in \{\kappa n, \ldots, \kappa(n+1)-1\}$, the control $\cnoise_{\kappa n} u_t$ can be applied.

		\begin{assumption}
    	\label{a:burst}
		    In addition to Assumption \ref{a:basic}, we assume that:
			\begin{enumerate}[label=(\roman*$'$), leftmargin=*, align=right, widest=vii, start=6]
				\item Control signals are sent to the actuator every $\kappa$ steps, and for each $t\in\Nz$, the control channel noise is of the form $\cnoise_{\kappa t} \mathbf{1}_{\kappa m}$, with $\cnoise_{\kappa n}\in\{0,1\}$ and $\PP(\cnoise_{\kappa n} = 1) = p\in\;]0, 1[$ for each $n\in\Nz$.\label{a:burst:nonzeroentriesinmean}
				\item $U_{\max} > \sqrt\kappa C_1\norm{\Reach_\kappa(A_2, B_2)^+}\norm{\Reach_\kappa(A_2, I_2)}/p$.\label{a:burst:Umaxbound}\AssumptionEnd
			\end{enumerate}
		\end{assumption}

		\begin{proposition}
		\label{p:burst}
			Consider the system \eqref{e:sys}, and suppose that Assumption \ref{a:burst} holds. Then there exists a $\kappa$-history dependent policy $\pi \Let (\pi_t)_{t\in\Nz}$ with control authority at most $U_{\max}$ (see \eqref{e:burstpolicy} below), such that for every initial condition $\xz$ there exists a constant $\msbound' = \msbound'(\xz, \kappa, p, C_1) > 0$ with
			\[
				\EE_{\xz}[\norm{x_t}^2] \le \msbound'\qquad \text{for all }t\in\Nz
			\]
			in closed-loop.
		\end{proposition}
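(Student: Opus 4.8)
The plan is to reduce mean-square boundedness to the marginally stable block and to exploit the especially simple structure of the burst channel, in which an entire $\kappa$-long control burst is either delivered intact (probability $p$) or erased (probability $1-p$). First I would dispose of the Schur-stable block: since $\norm{\acn_t} = \abs{\cnoise_{\kappa n}}\norm{u_t} \le \norm{u_t}\le U_{\max}$ for the Bernoulli channel and $A_1$ is Schur stable, the solution of $\xSchur_{t+1}=A_1\xSchur_t+B_1\acn_t+w^{(1)}_t$ obeys $\norm{\xSchur_t}\le\norm{A_1^t}\norm{\xSchur_0}+\sum_{s<t}\norm{A_1^{t-1-s}}\bigl(\norm{B_1}U_{\max}+\norm{w^{(1)}_s}\bigr)$; the geometric decay of $\norm{A_1^k}$ together with Assumption \ref{a:basic}-\ref{a:basic:4moment} gives $\sup_{t}\EE[\norm{\xSchur_t}^2]<\infty$ for any admissible policy. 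Hence it suffices to render the orthogonal block $\xOrtho$ mean-square bounded.

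For the orthogonal block I would work with the $\kappa$-subsampled chain $y_n\Let\xOrtho_{\kappa n}$. Writing $P\Let A_2^\kappa$ (orthogonal, so $\norm{Pz}=\norm z$), $R\Let\Reach_\kappa(A_2,B_2)$, and stacking the burst $\uu_n\Let(u_{\kappa n}\transp,\ldots,u_{\kappa(n+1)-1}\transp)\transp$, one window of the dynamics reads $y_{n+1}=Py_n+\cnoise_{\kappa n}R\uu_n+W_n$, where $W_n\Let\Reach_\kappa(A_2,I_2)(w^{(2)}_{\kappa n}\transp,\ldots,w^{(2)}_{\kappa(n+1)-1}\transp)\transp$ satisfies $\EE[\norm{W_n}]\le\sqrt\kappa\,C_1\norm{\Reach_\kappa(A_2,I_2)}$. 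I would take the policy \eqref{e:burstpolicy} to be the saturated drift-cancelling burst $\uu_n\Let-\sat_{U_{\max}}\bigl(R^+Py_n\bigr)$, so that $\norm{u_{\kappa n+j}}\le\norm{\uu_n}\le U_{\max}$ secures admissibility. Since $RR^+=I_{d_2}$, this yields $R\uu_n=-\beta_n Py_n$ with $\beta_n\Let\min\{1,\,U_{\max}/\norm{R^+Py_n}\}\in\;]0,1]$, whence
\[
	y_{n+1}=(1-\cnoise_{\kappa n}\beta_n)Py_n+W_n .
\]
The decisive point is that a delivered burst ($\cnoise_{\kappa n}=1$) cancels the drift exactly in the unsaturated regime (the correct gain is $1$, not $1/p$); using $\EE[\cnoise_{\kappa n}]=p$, the bound $0\le 1-\cnoise_{\kappa n}\beta_n\le1$, and the independence of $\cnoise_{\kappa n}$ from $(w_t)$, the triangle inequality gives the clean one-step estimate $\EE[\norm{y_{n+1}}\mid\sigalg_{\kappa n}]\le(1-p\beta_n)\norm{y_n}+\EE[\norm{W_n}]$ with no cross term.

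The drift then comes directly from the control-authority bound. In the saturated regime $\norm{R^+Py_n}>U_{\max}$ one has $\beta_n\norm{y_n}\ge U_{\max}/\norm{R^+}$, so $p\beta_n\norm{y_n}\ge pU_{\max}/\norm{R^+}$ and
\[
	\EE[\norm{y_{n+1}}\mid\sigalg_{\kappa n}]\le\norm{y_n}-\Bigl(\tfrac{pU_{\max}}{\norm{R^+}}-\sqrt\kappa\,C_1\norm{\Reach_\kappa(A_2,I_2)}\Bigr).
\]
Assumption \ref{a:burst}-\ref{a:burst:Umaxbound} makes the bracket a strictly positive constant $\theta$, and since $\norm{R^+z}\ge c\norm z$ for $c\Let\sigma_{\min}(R^+)>0$ (as $R^+$ has full column rank), the saturated regime contains $\{\norm{y_n}>U_{\max}/c\}$; thus $\EE[\norm{y_{n+1}}-\norm{y_n}\mid\sigalg_{\kappa n}]\le-\theta$ outside a ball.

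The hard part, and the reason the fourth-moment hypothesis is needed, is upgrading this first-moment drift to a genuine second-moment bound. Here I would use the key consequence of bounded control authority that the increment of $\norm{y_n}$ is itself bounded in high moments: in every regime $\beta_n\norm{y_n}\le U_{\max}/c$, so $\bigl\lvert\,\norm{y_{n+1}}-\norm{y_n}\,\bigr\rvert\le U_{\max}/c+\norm{W_n}$, whose fourth moment is uniformly bounded by Assumption \ref{a:basic}-\ref{a:basic:4moment}. A negative drift outside a bounded set together with uniformly bounded fourth moments of the increments yields, by a Foster--Lyapunov-type drift lemma of Pemantle--Rosenthal, $\sup_n\EE[\norm{y_n}^2]<\infty$ (indeed it even gives the third moment). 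Finally I would lift this to all instants: for $0\le j<\kappa$, $\xOrtho_{\kappa n+j}=A_2^j\xOrtho_{\kappa n}+\sum_{i<j}A_2^{j-1-i}\bigl(B_2\acn_{\kappa n+i}+w^{(2)}_{\kappa n+i}\bigr)$ involves the orthogonal $A_2$, bounded controls, and finitely many bounded-moment noise terms, so $\sup_t\EE[\norm{\xOrtho_t}^2]<\infty$; combining with the Schur block gives $\EE_{\xz}[\norm{x_t}^2]=\EE_{\xz}[\norm{\xSchur_t}^2]+\EE_{\xz}[\norm{\xOrtho_t}^2]\le\msbound'$, as required.
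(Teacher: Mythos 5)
Your argument is correct and follows the paper's architecture step for step: split off the Schur-stable block, pass to the $\kappa$-subsampled orthogonal block, establish a uniform negative conditional drift for $\norm{\xOrtho_{\kappa t}}$ outside a ball together with a uniform fourth-moment bound on its increments, invoke the Pemantle--Rosenthal result (Proposition \ref{p:PR}), and lift back to all time instants. The only substantive difference is where you place the saturation. The paper's policy \eqref{e:burstpolicy} is $\uu_{\kappa t}=-\Reach_\kappa(A_2,B_2)^+\sat_r\bigl(A_2^\kappa\xOrtho_{\kappa t}\bigr)$, saturating the \emph{state-space displacement} at a radius $r=a+\sqrt\kappa C_1\norm{\Reach_\kappa(A_2,I_2)}/p$ tuned so that $\norm{\Reach_\kappa(A_2,B_2)^+}\,r\le U_{\max}$; in the saturated regime a delivered burst then cancels a displacement of fixed length $r$, giving the drift $-pr+\sqrt\kappa C_1\norm{\Reach_\kappa(A_2,I_2)}=-pa$ directly. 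You instead saturate the stacked control itself at $U_{\max}$, which via $\Reach_\kappa\Reach_\kappa^+=I$ produces the multiplicative form $(1-\cnoise_{\kappa t}\beta_t)A_2^\kappa\xOrtho_{\kappa t}$ and a drift of at least $pU_{\max}/\norm{\Reach_\kappa(A_2,B_2)^+}$ minus the noise term; Assumption \ref{a:burst}-\ref{a:burst:Umaxbound} is precisely what makes that difference a positive constant, so the two policies are interchangeable under the stated hypotheses. Your variant uses the available authority slightly more aggressively (the cancelled displacement can exceed $r$ when far from the origin), and your observation that $\beta_t\norm{\xOrtho_{\kappa t}}$ is bounded by $U_{\max}/\sigma_{\min}\bigl(\Reach_\kappa(A_2,B_2)^+\bigr)$ in all regimes delivers the increment bound of Lemma \ref{l:PRcondition2} with no additional computation. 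All the supporting steps (admissibility of the sub-blocks, measurability, independence of $\cnoise_{\kappa t}$ and of the stacked process noise from $\sigalg_{\kappa t}$, and the passage from the subsampled chain to all $t$) check out.
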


		\begin{remark}
			We noted in Remark \ref{r:cnoiseset} that Proposition \ref{p:main} assumes minimal structure from the bounded set $\cnoiseset$. In contrast, Proposition \ref{p:burst} assumes a rather specific structure of the set $\cnoiseset$---that it consists of two elements (note that $\cnoise_{\kappa n}\in\{0, 1\}$ for each $n\in\Nz$ in Assumption \ref{a:burst}-\ref{a:burst:nonzeroentriesinmean}). The i.i.d Bernoulli assumption on $(\cnoise_{\kappa t})_{t\in\Nz}$ leads to a simpler description of the control authority $U_{\max}$ in Assumption \ref{a:burst}-\ref{a:burst:Umaxbound} compared to Assumption \ref{a:key}-\ref{a:key:Umaxbound}, and the analog of Assumption \ref{a:key}-\ref{a:key:meanvarrelation} is not required here.\RemarkEnd
		\end{remark}

		As promised in Remark \ref{r:cnoiseset}, we provide a simple scalar example illustrating some effects of varying the probability of transmission of the control signal.
		\begin{example}
		\label{ex:scalar}
			Consider the scalar system $x_{t+1} = x_t + \acn_t + w_t$, $t\ge 0$, with initial condition $x_0 = \xz$, $\acn_t = \cnoise_t u_t$. Suppose that $(\cnoise_t)_{t\in\Nz}$ is i.i.d Bernoulli $\{0, 1\}$ with $\PP(\cnoise_t = 1) = p > 0$, and let $(w_t)_{t\in\Nz}$ be i.i.d, and satisfy $\sup_{t\in\Nz}\EE[\abs{w_t}^4] = C_4' < \infty$. This implies, in particular, that $\sup_{t\in\Nz}\EE[\abs{w_t}] \le C_1' \le \sqrt[4]{C_4'}$. Suppose that $U_{\max} > C_1'/p$, where $u_t\in[-U_{\max}, U_{\max}]$ for all $t$. With this much data it is easy to verify the conditions of Assumption \ref{a:burst}. We conclude by Proposition \ref{p:burst} that there exists a policy with control authority at most $U_{\max}$ such that the system is mean-square bounded. In fact, we see that for every nonzero probability $p$ of transmission of the control signal, there exists a control authority $U_{\max} > 0$ and a policy with control authority at most $U_{\max}$, under which the state of the system is mean-square bounded.\ExampleEnd
		\end{example}

		\begin{remark}
			Notice that Proposition \ref{p:burst} does not contradict the main results of \citep{ref:SSFPS2007}, where it was proved (see \citep[Lemma 5.4]{ref:SSFPS2007}) that there exists a threshold probability of i.i.d.\ Bernoulli packet drops such that a stabilizing linear feedback for unstable linear systems can be found provided the drop probability is less than that threshold. Indeed, in Assumption \ref{a:basic} we have specifically ruled out unstable $A$.\RemarkEnd
		\end{remark}

	\section{Proofs of Propositions \ref{p:main} and \ref{p:burst}}
	\label{s:proofs}

		For our proofs of Propositions \ref{p:main} and \ref{p:burst} we shall employ the following immediate adaptation of \citep[Theorem 1]{ref:PemRos-99} on $\Lp 2$ bounds of nonnegative random variables:

		\begin{proposition}[{\citep{ref:PemRos-99}}]
		\label{p:PR}
			Let $(\Omega, \sigalg, \PP)$ be a probability space, and let $(\sigalg_t)_{t\in\Nz}$ be a filtration on $(\Omega, \sigalg, \PP)$. Suppose that $(\xi_t)_{t\in\Nz}$ is a family of nonnegative random variables adapted to $(\sigalg_t)_{t\in\Nz}$, such that there exist constants $a, M, J > 0$ such that $\xi_0 < J$, and for all $t\in\Nz$,
			\begin{gather}
				\EE^{\sigalg_t}[\xi_{t+1} - \xi_t] \le -a \qquad \text{on the set }\{\xi_t > J\},\label{e:PR1}\\
				\EE[\abs{\xi_{t+1} - \xi_t}^4\mid \xi_0, \ldots, \xi_t] \le M.\label{e:PR2}
			\end{gather}
			Then there exists $c = c(a, J, M) > 0$ such that $\displaystyle{\sup_{t\in\Nz}\EE\bigl[\xi_t^2\bigr] \le c}$.
		\end{proposition}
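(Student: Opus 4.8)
The plan is to read the statement as a negative-drift moment bound and to reconstruct the argument of \citep{ref:PemRos-99}: the hypotheses say that $(\xi_t)$ receives a uniform downward push of size $a$ whenever it exceeds the level $J$, while \eqref{e:PR2} keeps the one-step fluctuations under control. Because \eqref{e:PR2} furnishes only polynomial (not exponential) control of the increments, one cannot hope to find a contractive Lyapunov function of the form $\EE^{\sigalg_t}[g(\xi_{t+1})] \le \rho\, g(\xi_t) + b$ with $\rho < 1$ and $g(x) \ge x^2$; indeed the drift in \eqref{e:PR1} is additive rather than multiplicative, so no geometric drift is available. The right mechanism is instead an excursion (regeneration) analysis: on the set $\{\xi_t \le J\}$ the second moment is trivially bounded by $J^2$, so everything reduces to controlling the current excursion of $(\xi_s)$ above the threshold $J$.

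First I would record the elementary consequences of the hypotheses. Writing $D_s \Let \xi_{s+1} - \xi_s$, conditional Jensen applied to \eqref{e:PR2} gives $\abs{\EE^{\sigalg_s}[D_s]} \le M^{1/4}$ and $\EE^{\sigalg_s}[D_s^2] \le M^{1/2}$ for every $s$, while on $\{\xi_s > J\}$ the condition \eqref{e:PR1} sharpens this to $\EE^{\sigalg_s}[D_s] \le -a$. Decomposing $D_s = \EE^{\sigalg_s}[D_s] + m_s$ exhibits a martingale-difference part $(m_s)$ whose conditional fourth moments are bounded in terms of $M$, via an $(u+v)^4$-type estimate together with $\abs{\EE^{\sigalg_s}[D_s]}\le M^{1/4}$. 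Fix a time $t$ and let $\sigma$ be the last instant at or before $t$ with $\xi_\sigma \le J$ (such $\sigma$ exists precisely because $\xi_0 < J$). On the stretch $(\sigma, t]$ the process stays above $J$, so telescoping gives $\xi_t \le J + \abs{D_\sigma} + \sum_{s=\sigma+1}^{t-1}(m_s - a)$, exhibiting $\xi_t$ as the starting overshoot plus the running value of an adapted walk with drift at most $-a$ and martingale part $(m_s)$.

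The heart of the matter is then a Kiefer--Wolfowitz-type estimate in adapted form: for a walk with one-step conditional drift $\le -a$ and bounded conditional fourth (hence third) increment moments, the second moment of its running maximum is finite and bounded by a constant depending only on $a$ and $M$. The correct bookkeeping is that $(r+1)$ increment moments control the $r$-th moment of the maximum of a negative-drift walk; taking $r = 2$, the third moments guaranteed by \eqref{e:PR2} already suffice, so \eqref{e:PR2} is in fact stronger than needed and the adaptation is immediate. Combining the overshoot bound (controlled by $M^{1/2}$), the excursion-maximum bound, and the trivial estimate $J^2$ below the threshold would yield $\EE[\xi_t^2] \le c(a, J, M)$ uniformly in $t$, which is the claim.

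I expect the main obstacle to be the sharp estimate underlying this last step. Crude tools are too lossy: taking expectations in \eqref{e:PR1} produces only bounded time-averages $\tfrac1n\sum_{s<n}\EE[\cdots]$ rather than a genuine uniform-in-$t$ bound, and a union bound over excursion lengths combined with the Burkholder--Davis--Gundy inequality diverges, because it ignores that reaching a height $h$ against the linear drift $-a\ell$ forces the $O(\sqrt\ell)$-size martingale fluctuation to be anomalously large. Capturing this cancellation quantitatively, and doing so in the non-Markovian, non-stationary adapted setting---where $\sigma$ is a last-exit time rather than a stopping time, so optional stopping cannot be invoked directly---is the delicate part; it is exactly the content of \citep[Theorem 1]{ref:PemRos-99}, and tracking that the resulting constant depends only on $a$, $J$, and $M$ completes the proof.
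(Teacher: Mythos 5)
The paper never proves Proposition \ref{p:PR}: it is imported, with only cosmetic changes, as an ``immediate adaptation'' of \citep[Theorem 1]{ref:PemRos-99}, so the only meaningful comparison is with the original Pemantle--Rosenthal argument. Your sketch gets the architecture of that argument right --- reduce to the excursion above the level $J$ begun at the last entrance time $\sigma$, split each increment into its conditional mean (at most $-a$ above $J$) plus a martingale-difference part with conditional moments controlled by $M$, and then play the martingale fluctuation off against the accumulated drift $-a(t-\sigma)$ --- and you correctly diagnose why the crude routes (taking expectations in \eqref{e:PR1}, or a union bound over excursion lengths combined with Burkholder--Davis--Gundy) fail. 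But as a proof it is circular: the single step that carries all the content is explicitly delegated to ``exactly the content of \citep[Theorem 1]{ref:PemRos-99}'', i.e.\ to the statement being proved.

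Moreover, the one substantive quantitative claim you make about that step is unjustified and, as stated, misleading. You assert that the Kiefer--Wolfowitz bookkeeping ($(r+1)$ increment moments control the $r$-th moment of the maximum of a negative-drift walk) adapts ``immediately'', so that third moments would suffice and \eqref{e:PR2} is stronger than needed. That exponent count is for i.i.d.\ random walks; the point of \citep{ref:PemRos-99} is precisely that for merely adapted, non-stationary sequences the transfer is not immediate, and their Theorem~1 concludes only $\Lp{p-2}$-boundedness from conditional $p$-th moment bounds on the increments. That loss of one further moment is exactly why the hypothesis here is a \emph{fourth}-moment bound \eqref{e:PR2} for an $\Lp{2}$ conclusion (and why Assumption \ref{a:basic}-\ref{a:basic:4moment} demands finite fourth moments of $w_t$ rather than third). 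So \eqref{e:PR2} is not slack --- it is consumed exactly. A self-contained proof would have to supply the estimate converting negative conditional drift plus conditional fourth-moment increment bounds into a bound on $\sup_{t}\EE[\xi_t^2]$ that is uniform in $t$ (via the last-entrance/supermartingale analysis of \citep{ref:PemRos-99}), rather than naming it.
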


		In what follows we let $I_2$ denote the $d_2\times d_2$ identity matrix.
		\begin{lemma}
		\label{l:PRcondition1}
			Given the system \eqref{e:sys}, suppose that Assumption \ref{a:basic} holds, and consider the decomposition \eqref{e:Jordan}. Let $\sigalg_t$ be the $\sigma$-algebra generated by $(x_t)_{t\in\Nz}$, i.e., $\sigalg_t \Let \sigma(x_n;\, n = 0, \ldots, t)$ for each $t\in\Nz$. Then there exists $a, J > 0$ such that
			\[
				\EE^{\sigalg_{\kappa t}}\Bigl[\norm{\xOrtho_{\kappa(t+1)}} - \norm{\xOrtho_{\kappa t}}\Bigr] \le -a \qquad\text{on the set }\Bigl\{\norm{\xOrtho_{\kappa t}} > J\Bigr\}\quad \text{for all }t\in\Nz.
			\]
		\end{lemma}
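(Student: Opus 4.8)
The plan is to recognise that this lemma is precisely the drift hypothesis \eqref{e:PR1} of Proposition \ref{p:PR} applied, in the eventual proof of Proposition \ref{p:main}, to $\xi_t \Let \norm{\xOrtho_{\kappa t}}$ on the $\kappa$-subsampled orthogonal subsystem. First I would unroll the $\xOrtho$-dynamics in \eqref{e:Jordan} over one window $\{\kappa t, \ldots, \kappa(t+1)-1\}$. Writing $\uu$ for the stacked control, $\Cnoise$ for the stacked channel noise, and $\boldsymbol w$ for the stacked orthogonal process noise $w^{(2)}$ over the window, this gives
\[
	\xOrtho_{\kappa(t+1)} = A_2^\kappa \xOrtho_{\kappa t} + \Reach_\kappa(A_2, B_2)\bigl(\Cnoise \Sprod \uu\bigr) + \Reach_\kappa(A_2, I_2)\boldsymbol w,
\]
where $\Cnoise \Sprod \uu$ is the realised (channel-corrupted) control and $\EE[\Cnoise] = \mathbf 1_\kappa \otimes \mu_\cnoise$.

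Next I would exhibit the policy \eqref{e:finalcontrolpolicy} as a saturated deadbeat law that cancels the free evolution $A_2^\kappa \xOrtho_{\kappa t}$ \emph{in conditional mean}. Since $(A_2, B_2)$ is reachable in $\kappa$ steps, $\Reach_\kappa(A_2, B_2)$ has full row rank and $\Reach_\kappa(A_2, B_2)\Reach_\kappa(A_2, B_2)^+ = I_2$, so I would choose $\uu$ so that $\mathbf 1_\kappa \otimes \mu_\cnoise \Sprod \uu = \sat_\rho\bigl(-\Reach_\kappa(A_2, B_2)^+ A_2^\kappa \xOrtho_{\kappa t}\bigr)$, recovering $\uu$ by componentwise division by $\mu_\cnoise$ (legitimate since $\mu_\cnoise$ has nonzero entries, Assumption \ref{a:key}-\ref{a:key:cnoisesetandnonzeroentries}) with saturation radius $\rho \Let U_{\max}/\max_{i}\abs{(\mu_\cnoise)_i}^{-1}$, which guarantees the hard bound $\norm{u_{\kappa t + j}} \le U_{\max}$ blockwise. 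Because $\uu$ is $\sigalg_{\kappa t}$-measurable while $\Cnoise$ is independent of $\sigalg_{\kappa t}$, taking $\EE^{\sigalg_{\kappa t}}$ and the triangle inequality decompose $\xOrtho_{\kappa(t+1)}$ into a deterministic cancellation residual $e_t \Let A_2^\kappa \xOrtho_{\kappa t} + \Reach_\kappa(A_2, B_2)\bigl(\mathbf 1_\kappa \otimes \mu_\cnoise \Sprod \uu\bigr)$, a zero-mean channel-fluctuation term $\Reach_\kappa(A_2, B_2)\bigl((\Cnoise - \mathbf 1_\kappa \otimes \mu_\cnoise)\Sprod \uu\bigr)$, and the process-noise term.

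I would then estimate the three pieces on $\bigl\{\norm{\xOrtho_{\kappa t}} > J\bigr\}$. Using that $A_2$ is orthogonal (so $\norm{A_2^\kappa v} = \norm v$), once $\norm{\xOrtho_{\kappa t}}$ exceeds a threshold the saturation is active and $\sat_\rho$ acts parallel to its argument, whence $\norm{e_t} \le \norm{\xOrtho_{\kappa t}} - \rho/\norm{\Reach_\kappa(A_2, B_2)^+}$, a strict decrease. The channel term I would bound via the per-entry variance estimate $\EE\bigl[((\cnoise)_i - (\mu_\cnoise)_i)^2\bigr] \le \sigma_\cnoise$, Jensen, and a blockwise triangle inequality over the $\kappa$ steps (each with $\norm{A_2^{\kappa-1-j}B_2} = \norm{B_2} \le \norm{\Reach_\kappa(A_2, B_2)}$), yielding at most $\kappa \NSR \norm{\Reach_\kappa(A_2, B_2)}\rho$; the process-noise term is a constant $\le \sqrt\kappa\, C_1 \norm{\Reach_\kappa(A_2, I_2)}$ via the fourth-moment hypothesis (Assumption \ref{a:basic}-\ref{a:basic:4moment}). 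Combining, the conditional drift is at most
\[
	\frac{\rho}{\norm{\Reach_\kappa(A_2, B_2)^+}}\Bigl(\kappa \NSR \norm{\Reach_\kappa(A_2, B_2)^+}\norm{\Reach_\kappa(A_2, B_2)} - 1\Bigr) + \sqrt\kappa\, C_1 \norm{\Reach_\kappa(A_2, I_2)},
\]
which is a strictly negative constant: Assumption \ref{a:key}-\ref{a:key:meanvarrelation} makes the bracket negative, and Assumption \ref{a:key}-\ref{a:key:Umaxbound} is exactly the condition forcing $\rho$ large enough that the first term dominates the process-noise constant. This value is the required $-a$, and $J$ is the saturation threshold.

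I expect the main obstacle to be the saturation bookkeeping: verifying that for $\norm{\xOrtho_{\kappa t}} > J$ the control genuinely saturates and delivers the fixed decrease $\rho/\norm{\Reach_\kappa(A_2, B_2)^+}$, while simultaneously keeping the multiplicative channel fluctuation under control, and checking that the two parts of Assumption \ref{a:key}-\ref{a:key:techcond} are precisely what close the inequality with a uniform (state-independent) margin $a > 0$.
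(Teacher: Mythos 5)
Your proposal is correct and follows essentially the same route as the paper: the same mean-deadbeat policy built from $\Reach_\kappa(A_2,B_2)^+$ with componentwise division by $\mu_\cnoise$, the same three-way split into mean cancellation, zero-mean channel fluctuation (bounded by $\kappa\NSR\norm{\Reach_\kappa(A_2,B_2)}U_{\max}$ via Jensen), and process noise, with Assumptions \ref{a:key}-\ref{a:key:meanvarrelation} and \ref{a:key}-\ref{a:key:Umaxbound} closing the drift inequality exactly as in \eqref{e:semifinalineq}--\eqref{e:whatisr}. The only (immaterial) difference is that you saturate in the input space with radius $\rho$ and pay a factor $1/\norm{\Reach_\kappa(A_2,B_2)^+}$ in the guaranteed decrease, whereas the paper saturates the state-space vector $A_2^\kappa\xOrtho_{\kappa t}$ with radius $r$ and checks the control bound afterwards; the resulting conditions on $U_{\max}$ coincide.
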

		\begin{proof}
			To simplify notation we first write compactly 
			\begin{equation}
			\label{e:whatisuu}
				\Cnoise_{\kappa t} \Let \begin{bmatrix}\cnoise_{\kappa t}\\\cnoise_{\kappa t + 1}\\\vdots\\\cnoise_{\kappa(t+1)-1}\end{bmatrix} \quad \text{and}\quad \uu_{\kappa t} \Let \begin{bmatrix}u_{\kappa t}\\u_{\kappa t + 1}\\\vdots\\u_{\kappa(t+1)-1}\end{bmatrix}.
			\end{equation}
			It follows from the system dynamics that
			\[
				\xOrtho_{\kappa(t+1)} = A_2^\kappa \xOrtho_{\kappa t} + \Reach_\kappa(A_2, B_2)\uu_{\kappa t} + \Reach_\kappa(A_2, I_2)w^{(2)}_{\kappa t:\kappa(t+1)-1},\qquad t\in\Nz,
			\]
			where $w^{(2)}_{\kappa t:\kappa(t+1)-1} \Let \begin{bmatrix} \bigl(w^{(2)}_{\kappa t}\bigr)\transp & \cdots & \bigl(w^{(2)}_{\kappa (t+1) - 1}\bigr)\transp\end{bmatrix}\transp$. Therefore,
			\begin{align*}
				& \EE^{\sigalg_{\kappa t}}\Bigl[\norm{\xOrtho_{\kappa(t+1)}} - \norm{\xOrtho_{\kappa t}}\Bigr]\\
				& = \EE^{\sigalg_{\kappa t}}\Bigl[\norm{A_2^\kappa \xOrtho_{\kappa t} + \Reach_\kappa(A_2, B_2)\uu_{\kappa t} + \Reach_\kappa(A_2, I_2)w^{(2)}_{\kappa t:\kappa(t+1)-1}} - \norm{\xOrtho_{\kappa t}}\Bigr]\\
				& \le \EE^{\sigalg_{\kappa t}}\Bigl[\norm{A_2^\kappa \xOrtho_{\kappa t} + \Reach_\kappa(A_2, B_2)\uu_{\kappa t}} - \norm{\xOrtho_{\kappa t}}\Bigr] + \norm{\Reach_\kappa(A_2, I_2)}\EE\Bigl[\norm{w^{(2)}_{\kappa t:\kappa(t+1)-1}}\Bigr]\\
				& \le \EE^{\sigalg_{\kappa t}}\Bigl[\norm{A_2^\kappa \xOrtho_{\kappa t} + \Reach_\kappa(A_2, B_2)\uu_{\kappa t}} - \norm{\xOrtho_{\kappa t}}\Bigr] + \sqrt\kappa \norm{\Reach_\kappa(A_2, I_2)} C_1.
			\end{align*}
			Since $A_2$ is orthogonal, we have $\norm{A_2^\kappa \xOrtho_{\kappa t}} = \norm{\xOrtho_{\kappa t}}$. We require $\uu_{\kappa t}$ be $\sigalg_{\kappa t}$-measurable. Employing Jensen's inequality and sublinearity of the square-root function, we get
			\begin{align*}
				\EE^{\sigalg_{\kappa t}} & \Bigl[\norm{A_2^\kappa \xOrtho_{\kappa t} + \Reach_\kappa(A_2, B_2)\uu_{\kappa t}}\Bigr]\\
				& \le \sqrt{ \EE^{\sigalg_{\kappa t}}\Bigl[\norm{A_2^\kappa \xOrtho_{\kappa t} + \Reach_\kappa(A_2, B_2)\uu_{\kappa t}}^2\Bigr] }\\
				& = \left(\norm{\xOrtho_{\kappa t}}^2 + 2\bigl(\xOrtho_{\kappa t}\bigr)\transp (A_2^\kappa)\transp\Reach_k(A_2, B_2)(\EE^{\sigalg_{\kappa t}}[\Cnoise_{\kappa t}]\Sprod\uu_{\kappa t})\right.\\
				& \qquad\qquad + \left.\EE^{\sigalg_{\kappa t}}\bigl[\bigl(\Reach_\kappa(A_2, B_2)(\Cnoise_{\kappa t}\Sprod\uu_{\kappa t})\bigr)\transp\bigl(\Reach_\kappa(A_2, B_2)(\Cnoise_{\kappa t}\Sprod\uu_{\kappa t})\bigr)\bigr]\right)^{1/2}\\
				& \le \norm{A_2^\kappa \xOrtho_{\kappa t} + \Reach_\kappa(A_2, B_2)(\EE^{\sigalg_{\kappa t}}[\Cnoise_{\kappa t}]\Sprod\uu_{\kappa t})}\\
				& \qquad\qquad + \sqrt{\EE^{\sigalg_{\kappa t}}\bigl[\norm{\Reach_\kappa(A_2, B_2)(\Cnoise_{\kappa t}\Sprod\uu_{\kappa t})}^2\bigr] - \norm{\Reach_\kappa(A_2, B_2)(\EE^{\sigalg_{\kappa t}}[\Cnoise_{\kappa t}]\Sprod\uu_{\kappa t})}^2},
			\end{align*}
			The last term under the square-root is simply the conditional variance of the vector $\Reach_\kappa(A_2, B_2)(\Cnoise_{\kappa t}\Sprod\uu_{\kappa t})$ given $\sigalg_{\kappa t}$. Since $(\cnoise_n)_{n=\kappa t}^{\kappa(t+1)-1}$ is independent of $\sigalg_{\kappa t}$, $\bar\Cnoise \Let \EE^{\sigalg_{\kappa t}}[\Cnoise_{\kappa t}]$ is a constant, and equals $\vecof\{\underset{\kappa\text{-times}}{\underbrace{{\mu_\cnoise}, \ldots, {\mu_\cnoise}}}\}$.) Thus, we see that
			\begin{align*}
				& \EE^{\sigalg_{\kappa t}}\Bigl[\norm{A_2^\kappa \xOrtho_{\kappa t} + \Reach_\kappa(A_2, B_2)\uu_{\kappa t}}\Bigr]\\
				& \;\le \norm{A_2^\kappa \xOrtho_{\kappa t} + \Reach_\kappa(A_2, B_2)(\bar\Cnoise\Sprod\uu_{\kappa t})} + \sqrt{\EE^{\sigalg_{\kappa t}}\bigl[\norm{\Reach_\kappa(A_2, B_2)((\Cnoise_{\kappa t} - \bar\Cnoise)\Sprod\uu_{\kappa t})}^2\bigr]}\\
				& \;\le \norm{A_2^\kappa \xOrtho_{\kappa t} + \Reach_\kappa(A_2, B_2)(\bar\Cnoise\Sprod\uu_{\kappa t})} + \sqrt\kappa\norm{\Reach_\kappa(A_2, B_2)}U_{\max}\sqrt{\EE^{\sigalg_{\kappa t}}\bigl[\norm{\Cnoise_{\kappa t} - \bar\Cnoise}^2\bigr]}\\
				& \;\le \norm{A_2^\kappa \xOrtho_{\kappa t} + \Reach_\kappa(A_2, B_2)(\bar\Cnoise\Sprod\uu_{\kappa t})} + \kappa\norm{\Reach_\kappa(A_2, B_2)}U_{\max}\sqrt{{\sigma_\cnoise}}.
			\end{align*}
			Collecting the inequalities above, we see that
			\begin{equation}
			\label{e:semifinalineq}
			\begin{aligned}
				\EE^{\sigalg_{\kappa t}}\Bigl[\norm{\xOrtho_{\kappa(t+1)}} - & \norm{\xOrtho_{\kappa t}}\Bigr] \le \norm{A_2^\kappa \xOrtho_{\kappa t} + \Reach_\kappa(A_2, B_2)(\bar\Cnoise\Sprod\uu_{\kappa t})} - \norm{A_2^\kappa \xOrtho_{\kappa t}}\\
				& \qquad + \kappa\bigl(\norm{\Reach_\kappa(A_2, B_2)}U_{\max}\sqrt{{\sigma_\cnoise}} + \norm{\Reach_\kappa(A_2, I_2)} C_1/\sqrt{\kappa}\bigr).
			\end{aligned}
			\end{equation}
			In view of Assumption \ref{a:key}-\ref{a:key:meanvarrelation} we see that there exists $ 0 < a \Let U_{\max} \bigl( 1 - \kappa  \NSR \norm{\Reach_\kappa(A_2, B_2)^+}\norm{\Reach_\kappa(A_2, B_2)} \bigr) - \sqrt\kappa C_1 \left(\displaystyle{\max_{i=1, \ldots, m}\abs{({\mu_\cnoise})_i}^{-1}}\right) \norm{\Reach_\kappa(A_2, B_2)^+}\norm{\Reach_\kappa(A_2, I_2)}$. We now define
			\begin{equation}
			\label{e:whatisr}
				r \Let a + \kappa\bigl(\norm{\Reach_\kappa(A_2, B_2)}U_{\max}\sqrt{{\sigma_\cnoise}} + \norm{\Reach_\kappa(A_2, I_2)} C_1/\sqrt{\kappa}\bigr) \le U_{\max}.
			\end{equation}
			By Assumption \ref{a:key}-\ref{a:key:cnoisesetandnonzeroentries}, every entry of $\bar\Cnoise$ is nonzero; we let $\bar\Cnoise^{(-1)}$ be the vector of reciprocals of each entry of $\bar\Cnoise$ (i.e., $(\bar\Cnoise^{(-1)})_i = (\bar\Cnoise_i)^{-1}$ for each $i$). We define our control policy\footnote{This controller resembles in part the Ackermann's formula in standard linear control theory \citep[p.\ 477]{ref:FraPowEma-06} employed in unconstrained deadbeat controllers.}
			\begin{equation}
			\label{e:finalcontrolpolicy}
				\uu_{\kappa t} \Let \uu_{\kappa t}\bigl(\xOrtho_{\kappa t}\bigr) \Let -\Reach_\kappa(A_2, B_2)^+ \sat_r\bigl(A_2^\kappa \xOrtho_{\kappa t}\bigr)\Sprod\bar\Cnoise^{(-1)},
			\end{equation}
			where $\sat_r$ is the function defined in \eqref{e:whatissatr}. Clearly, $\uu_{\kappa t}$ is $\sigalg_{\kappa t}$-measurable. Substituting into \eqref{e:semifinalineq} we see that
			\begin{align*}
				\EE^{\sigalg_{\kappa t}}\Bigl[\norm{\xOrtho_{\kappa(t+1)}} - \norm{\xOrtho_{\kappa t}}\Bigr] & \le -r + \kappa\bigl(\norm{\Reach_\kappa(A_2, B_2)}U_{\max}\sqrt{{\sigma_\cnoise}} + \norm{\Reach_\kappa(A_2, I_2)} C_1/\sqrt\kappa\bigr)\\
				& \qquad\qquad \text{on the set $\big\{\norm{\xOrtho_{\kappa t}} \ge r\bigr\}$}\\
				& \le -a\qquad\text{on the set $\bigl\{\norm{\xOrtho_{\kappa t}} \ge r\bigr\}$},
			\end{align*}
			where the last inequality follows from the definition of $r$ above.

			Thus, it only remains to see that the control policy defined in \eqref{e:finalcontrolpolicy} satisfies the bound $\norm{u_t} \le U_{\max}$ for each $t$. But in view of the definition of $r$ in \eqref{e:whatisr} and our policy \eqref{e:finalcontrolpolicy}, we see that $\norm{\uu_{\kappa t}} \le r \norm{\Reach_\kappa(A_2, B_2)^+} \max_{i=1, \ldots, m}\abs{(\mu_\cnoise)_i}^{-1} \le U_{\max}$, and the assertion follows immediately.
		\end{proof}

		\begin{lemma}
		\label{l:PRcondition2}
			Given the system \eqref{e:sys}, suppose that Assumption \ref{a:basic} holds, and consider the decomposition \eqref{e:Jordan}. Then there exists $M > 0$ such that
			\[
				\EE\Bigl[\abs{\norm{\xOrtho_{\kappa(t+1)}} - \norm{\xOrtho_{\kappa t}}}^4\,\Big|\,\norm{\xOrtho_0}, \ldots, \norm{\xOrtho_{\kappa t}}\Bigr] \le M\qquad \text{for all }t\in\Nz.
			\]
		\end{lemma}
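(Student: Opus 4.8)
The plan is to bound the fourth conditional moment of the subsampled increment by a deterministic constant, reusing the block dynamics already derived in the proof of Lemma~\ref{l:PRcondition1}. The first step is to strip off the dependence on the current state. Since $A_2$ is orthogonal we have $\norm{A_2^\kappa \xOrtho_{\kappa t}} = \norm{\xOrtho_{\kappa t}}$, so the reverse triangle inequality yields the pathwise estimate
\[
	\abs{\norm{\xOrtho_{\kappa(t+1)}} - \norm{\xOrtho_{\kappa t}}}
	= \abs{\norm{\xOrtho_{\kappa(t+1)}} - \norm{A_2^\kappa \xOrtho_{\kappa t}}}
	\le \norm{\xOrtho_{\kappa(t+1)} - A_2^\kappa \xOrtho_{\kappa t}}.
\]
This is the key simplification: the right-hand side no longer contains $\xOrtho_{\kappa t}$ and depends only on the control and noise injected over the block $\{\kappa t, \ldots, \kappa(t+1)-1\}$.

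Next I would substitute $\xOrtho_{\kappa(t+1)} - A_2^\kappa \xOrtho_{\kappa t} = \Reach_\kappa(A_2, B_2)(\Cnoise_{\kappa t}\Sprod\uu_{\kappa t}) + \Reach_\kappa(A_2, I_2)\, w^{(2)}_{\kappa t:\kappa(t+1)-1}$ and split via the triangle inequality into a control part and a process-noise part. The control part is bounded pathwise by a constant independent of $t$: each $u_{\kappa t+s}$ obeys the hard bound $\norm{u_{\kappa t+s}} \le U_{\max}$ by Assumption~\ref{a:basic}-\ref{a:basic:uconstr}, whence $\norm{\uu_{\kappa t}} \le \sqrt{\kappa}\,U_{\max}$, and since the channel noise $\Cnoise_{\kappa t}$ is bounded (as it is in both regimes in which this lemma is invoked, Assumptions~\ref{a:key}-\ref{a:key:cnoisesetandnonzeroentries} and~\ref{a:burst}-\ref{a:burst:nonzeroentriesinmean}), the Hadamard product $\Cnoise_{\kappa t}\Sprod\uu_{\kappa t}$ has norm at most some deterministic $K_1$.

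It then remains to absorb the process-noise part into the fourth moment. Using monotonicity of $x\mapsto(x + y)^4$ on the nonnegative reals together with the a.s.\ control bound, I would estimate the increment pathwise by $(K_1 + \norm{\Reach_\kappa(A_2, I_2)}\,\norm{w^{(2)}_{\kappa t:\kappa(t+1)-1}})^4$, expand by the binomial theorem, and take the conditional expectation. Because $(w_t)_{t\in\Nz}$ is an independent sequence (Assumption~\ref{a:basic}-\ref{a:basic:4moment}), the noise entering this block is independent of $\norm{\xOrtho_0}, \ldots, \norm{\xOrtho_{\kappa t}}$, so every conditional moment reduces to an unconditional one. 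The top-order term is handled by a power-mean estimate $\norm{w^{(2)}_{\kappa t:\kappa(t+1)-1}}^4 \le \kappa \sum_{s=0}^{\kappa-1}\norm{w^{(2)}_{\kappa t+s}}^4 \le \kappa \sum_{s=0}^{\kappa-1}\norm{w_{\kappa t+s}}^4$, giving $\EE[\norm{w^{(2)}_{\kappa t:\kappa(t+1)-1}}^4] \le \kappa^2 C_4$, and the lower-order moments $\EE[\norm{w_{\kappa t+s}}^j]$ with $j \le 3$ are uniformly finite by Lyapunov's inequality. Collecting the five terms of the expansion produces a bound $M$ that is independent of $t$.

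The only genuinely delicate point is the pathwise boundedness of the control part, and it is precisely here that the hard input constraint is used: although the feedback \eqref{e:finalcontrolpolicy} depends on the unbounded state $\xOrtho_{\kappa t}$, it is a saturated (hence uniformly bounded) map, so the control term never exceeds a fixed multiple of $U_{\max}$ and the multiplicative channel noise only rescales it by a bounded factor. Everything downstream is a routine fourth-power expansion; notably, no recursion or martingale argument is needed, because the orthogonality of $A_2$ has already decoupled the increment from the current state.
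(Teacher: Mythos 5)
Your proposal is correct and follows essentially the same route as the paper's proof: orthogonality of $A_2$ plus the reverse triangle inequality to reduce the increment to the block input $\Reach_\kappa(A_2,B_2)(\Cnoise_{\kappa t}\Sprod\uu_{\kappa t}) + \Reach_\kappa(A_2,I_2)\,w^{(2)}_{\kappa t:\kappa(t+1)-1}$, a pathwise bound on the control term via the hard input constraint and the boundedness of $\cnoiseset$, and independence of the block noise from the conditioning variables. If anything, your final bookkeeping (binomial expansion with $\EE[\norm{w^{(2)}_{\kappa t:\kappa(t+1)-1}}^4]\le\kappa^2 C_4$ and Lyapunov's inequality for the lower-order moments) is more careful than the paper's somewhat terse invocation of Jensen's inequality at the corresponding step.
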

		\begin{proof}
			We retain the notation $w^{(2)}_{\kappa t:\kappa(t+1)-1}$ from the proof of Lemma \ref{l:PRcondition1} and the definition of $\uu_{\kappa t}$ from \eqref{e:whatisuu}. Fix $t\in\Nz$. Observe that since $A_2$ is orthogonal, $\norm{\xOrtho_{\kappa t}} = \norm{A_2^\kappa \xOrtho_{\kappa t}}$, and therefore,
			\begin{align*}
				& \EE\Bigl[\abs{\norm{\xOrtho_{\kappa (t+1)}} - \norm{\xOrtho_{\kappa t}}}^4\,\Big|\,\bigl\{\norm{\xOrtho_{\kappa n}}\bigr\}_{n=0}^t\Bigr]\\
				& = \EE\Bigl[\Bigl|\norm{A_2^\kappa \xOrtho_{\kappa t} + \Reach_\kappa(A_2, B_2)\uu_{\kappa t} + \Reach_\kappa(A_2, I_2)w^{(2)}_{\kappa t:\kappa(t+1)-1}} - \norm{A_2^\kappa \xOrtho_{\kappa t}}\Bigr|^4\,\Big|\,\bigl\{\norm{\xOrtho_{\kappa n}}\bigr\}_{n=0}^t\Bigr]\\
				& \le \EE\Bigl[\Bigl\|A_2^\kappa \xOrtho_{\kappa t} + \Reach_\kappa(A_2, B_2)\uu_{\kappa t} + \Reach_\kappa(A_2, I_2)w^{(2)}_{\kappa t:\kappa(t+1)-1} - A_2^\kappa \xOrtho_{\kappa t}\Bigr\|^4\,\Big|\,\bigl\{\norm{\xOrtho_{\kappa n}}\bigr\}_{n=0}^t\Bigr]\\
				& = \EE\Bigl[\norm{\Reach_\kappa(A_2, B_2)\uu_{\kappa t} + \Reach_\kappa(A_2, I_2)w^{(2)}_{\kappa t:\kappa(t+1)-1}}^4\,\Big|\,\bigl\{\norm{\xOrtho_{\kappa n}}\bigr\}_{n=0}^t\Bigr].
			\end{align*}
			By Assumption \ref{a:basic}-\ref{a:basic:uconstr}, $\norm{\acn_t} \le \sqrt{m}U_{\max}\diam(\cnoiseset)$, which implies that
			\begin{align*}
				\EE& \Bigl[\norm{\Reach_\kappa(A_2, B_2)\uu_{\kappa t} + \Reach_\kappa(A_2, I_2)w^{(2)}_{\kappa t:\kappa(t+1)-1}}^4\,\Big|\,\bigl\{\norm{\xOrtho_{\kappa n}}\bigr\}_{n=0}^t\Bigr]\\
				& \le \EE\Bigl[\Bigl(\kappa \sqrt{m} U_{\max}\diam(\cnoiseset) \norm{\Reach_\kappa(A_2, B_2)} + \norm{\Reach_\kappa(A_2, I_2)}\norm{w^{(2)}_{\kappa t:\kappa(t+1)-1}}\Bigr)^4\,\Big|\,\bigl\{\norm{\xOrtho_{\kappa n}}\bigr\}_{n=0}^t\Bigr].
			\end{align*}
			Noting that $w^{(2)}_{\kappa t:\kappa(t+1)-1}$ is independent of $\norm{\xOrtho_0}, \ldots, \norm{\xOrtho_{\kappa t}}$ in view of Assumption \ref{a:basic}-\ref{a:basic:4moment}, applying Jensen's inequality to the right-hand side above yields
			\begin{align*}
				\EE & \Bigl[\Bigl(\kappa \sqrt{m} U_{\max}\diam(\cnoiseset) \norm{\Reach_\kappa(A_2, B_2)} + \norm{\Reach_\kappa(A_2, I_2)}\norm{w^{(2)}_{\kappa t:\kappa(t+1)-1}}\Bigr)^4\Bigr]\\
				& = \EE\Bigl[\Bigl(\kappa \sqrt{m} U_{\max}\diam(\cnoiseset) \norm{\Reach_\kappa(A_2, B_2)} + \kappa \norm{\Reach_\kappa(A_2, I_2)}\norm{w^{(2)}_{\kappa t}}\Bigr)^4\Bigr]\\
				& \le \kappa^4 \Bigl(\sqrt{m} U_{\max}\diam(\cnoiseset) \norm{\Reach_\kappa(A_2, B_2)} + \norm{\Reach_\kappa(A_2, I_2)}C_1\Bigr)^4.
			\end{align*}
			The assertion follows at once with $M$ equal to the right-hand side of the last inequality.
		\end{proof}

		\begin{proof}[Proof of Proposition \ref{p:main}]
			From \eqref{e:Jordan} we see that the system splits into two parts, $\xSchur$ and $\xOrtho$, with the sequence $(\xSchur_t)_{t\in\Nz}$ describing the evolution of the Schur stable component of the state, and $(\xOrtho_t)_{t\in\Nz}$ describing the evolution of the orthogonal component of the state. It is well-known that $(\xSchur_t)_{t\in\Nz}$ is mean-square bounded so long as the control is bounded, which by Assumption \ref{a:key}-\ref{a:key:cnoisesetandnonzeroentries} clearly holds (i.e., there exists $\msbound^{(1)} > 0$ such that $\EE_{\xz}\bigl[\norm{\xSchur_t}^2\bigr] \le \msbound^{(1)}$ for all $t\in\Nz$). It thus suffices to concentrate on $(\xOrtho_t)_{t\in\Nz}$. We let $\xi_t \Let \norm{\xOrtho_{\kappa t}}$ for each $t\in\Nz$. We see that:
			\begin{itemize}[label=$\circ$, leftmargin=*]
				\item the condition \eqref{e:PR1} of Proposition \ref{p:PR} holds with $J = r$, where $r$ is as defined in \eqref{e:whatisr}, by Lemma \ref{l:PRcondition1}, and
				\item the condition \eqref{e:PR2} of Proposition \ref{p:PR} holds by Lemma \ref{l:PRcondition2}.
			\end{itemize}
			Defining $J \Let \max\bigl\{\norm{\xOrtho_0}, r\bigr\}$, we see that by Proposition \ref{p:PR} there exists a $\wt{\msbound}^{(2)} = \wt{\msbound}^{(2)}(a, M, J)$ such that $\EE_{\xz}\bigl[\norm{\xOrtho_{\kappa t}}^2\bigr] \le \wt{\msbound}^{(2)}$ for all $t\in\Nz$. Since the subsampled process $(\xOrtho_{\kappa t})_{t\in\Nz}$ is mean-square bounded, and $\xOrtho$ is generated by a linear dynamical system, we conclude that there exists $\msbound^{(2)} > 0$ such that $\EE_{\xz}\bigl[\norm{\xOrtho_t}^2\bigr] \le \msbound^{(2)}$ for all $t\in\Nz$. The assertion of Proposition \ref{p:main} follows with $\msbound \Let \msbound^{(1)} + \msbound^{(2)}$ and noticing that $a$ and $J$ depend on $\xz$, $\kappa$, $\mu_\cnoise$, $\NSR$ and $C_1$.
		\end{proof}

		\begin{proof}[Proof of Proposition \ref{p:burst}]
			Let $\sigalg_t \Let \sigma(x_n,\;n = 0, 1, \ldots, t)$. Let us consider the $\kappa$-subsampled system
			\[
			    x_{\kappa (t+1)}^{(2)} = A_2^\kappa x_{\kappa t}^{(2)} +\Reach_\kappa(A_2,B_2)\cnoise_{\kappa t}u_{\kappa t:\kappa (t+1) - 1} +\Reach_\kappa(A_2, I_2) w_{\kappa t:\kappa (t+1) - 1},\qquad t\in\Nz,
			\]
			where $u_{\kappa t:\kappa(t+1)-1} \Let \begin{bmatrix} u_{\kappa t}\transp & \ldots & u_{\kappa(t+1)-1}\transp\end{bmatrix}\transp$. For this subsampled system we propose the control policy:
			\begin{equation}
			\label{e:burstpolicy}
			    u_{\kappa t:\kappa (t+1) - 1} = -\Reach_\kappa(A_2,B_2)^{+}\sat_r\bigl(A_2^\kappa \xOrtho_{\kappa t}\bigr),
			\end{equation}
			for some $r>0$ to be defined shortly. Let us verify the conditions of Proposition \ref{p:PR} for the process $\bigl(\norm{\xOrtho_{\kappa t}}\bigr)_{t\in\Nz}$ under the control policy proposed above. We see immediately that
			\begin{align*}
			\EE^{\sigalg_{\kappa t}} & \bigl[\norm{\xOrtho_{\kappa (t+1)}}-\norm{\xOrtho_{\kappa t}}\bigr]\\
				& \leq \EE^{\sigalg_{\kappa t}}\bigl[\norm{A_2^\kappa \xOrtho_{t\kappa} + \Reach_\kappa(A_2, B_2) \cnoise_{\kappa t} u_{\kappa t:\kappa (t+1) - 1}}\bigr] - \norm{\xOrtho_{\kappa t}} + \EE\bigl[\norm{\Reach_\kappa(A_2,I_2)w_{\kappa t:\kappa (t+1)-1}}\bigr]\\
				& = p\norm{A_2^\kappa \xOrtho_{\kappa t} + \Reach_\kappa(A_2,B_2) u_{\kappa t:\kappa(t+1) - 1}} + (1-p)\norm{A_2^\kappa \xOrtho_{\kappa t}}-\norm{\xOrtho_{\kappa t}}\\
				& \qquad\qquad + \EE\bigl[\norm{\Reach_\kappa(A_2, I_2) w_{\kappa t:\kappa(t+1) - 1}}\bigr]\\
				& = p\left(\norm{A_2^\kappa \xOrtho_{\kappa t} + \Reach_\kappa(A_2, B_2) u_{\kappa t:\kappa(t+1) - 1}} - \norm{\xOrtho_{\kappa t}}\right) + \sqrt{\kappa} \norm{\Reach_\kappa(A_2, I_2)} C_1\\
				& = -pr + \sqrt{\kappa} \norm{\Reach_\kappa(A_2, I_2)} C_1,
			\end{align*}
			where we have employed orthogonality of $A_2$ to arrive at the second equality above. By Assumption \ref{a:burst}-\ref{a:burst:Umaxbound} we see that there exists $a > 0$ such that $\norm{\Reach_\kappa(A_2, B_2)^+}\bigl(a + \sqrt{\kappa} C_1 \norm{\Reach_\kappa(A_2, I_2)}/p\bigr) \le U_{\max}$. Letting $r \Let a + \sqrt{\kappa} C_1 \norm{\Reach_\kappa(A_2, I_2)}/p$, we see that the condition \eqref{e:PR1} is verified with $J \Let r$. The condition \eqref{e:PR2} follows readily from Lemma \ref{l:PRcondition2}, since the elements of the control input are uniformly bounded. Letting $J \Let \max\bigl\{r, \norm{\xOrtho_0}\bigr\}$, we see that by Proposition \eqref{p:PR} there exists a constant $\msbound^{(2)} > 0$ such that $\EE_{\xz}\bigl[\norm{\xOrtho_t}^2\bigr] \le \msbound^{(2)}$ for all $t\in\Nz$. By the same argument involving the Schur stable part $\xSchur$ as in the proof of Proposition \ref{p:main}, we see that there exists a constant $\msbound' > 0$ such that $\EE_{\xz}[\norm{x_t}^2] \le \msbound'$ for all $t\in\Nz$, concluding the proof.
		\end{proof}

	\bigskip

\end{document}